\newtheorem{theorem}{Theorem}[section]
\newtheorem{corollary}[theorem]{Corollary}
\newtheorem{lemma}[theorem]{Lemma}
\theoremstyle{definition}
\newtheorem{definition}[theorem]{Definition}
\theoremstyle{remark}
\newtheorem{remark}[theorem]{Remark}
\newtheorem{example}[theorem]{Example}
\newcommand{\p}{\varphi}
\newcommand{\R}{\mathbb{R}}
\newcommand{\fr}{\vec \varphi}
\numberwithin{equation}{section}
\begin{document}

\title[Stable comparison of multidimensional persistent homology groups]{Stable comparison of multidimensional persistent homology groups with torsion}

\author[P. Frosini]{Patrizio Frosini}
\address{Patrizio Frosini, ARCES, Universit\`a di Bologna,
via Toffano $2/2$, I-$40135$ Bologna, Italia\newline Dipartimento
di Matematica, Universit\`a di Bologna, P.zza di Porta S. Donato
5, I-$40126$ Bologna, Italia, tel. +39-051-2094478, fax.
+39-051-2094490} \email{frosini@dm.unibo.it}


\subjclass[2000]{}

\date{\today} 

\dedicatory{This paper is dedicated to the memory of Padre Armando Felice Verde. G.g.}

\keywords{Multidimensional persistent homology, shape comparison, matching distance, natural pseudo-distance}

\subjclass[2010]{Primary 55N35; Secondary 68U05.}

\begin{abstract}
The present lack of a stable method to compare persistent homology groups with torsion is a relevant problem in current research about Persistent Homology and its applications in Pattern Recognition. In this paper we introduce a pseudo-distance $d_T$ that represents a possible solution to this problem. Indeed, $d_T$ is a pseudo-distance between multidimensional persistent homology groups with coefficients in an Abelian group, hence possibly having torsion. Our main theorem proves the stability of the new pseudo-distance with respect to the change of the filtering function, expressed both with respect to the max-norm and to the natural pseudo-distance between topological spaces endowed with $\R^n$-valued filtering functions. Furthermore, we prove a result showing the relationship between $d_T$ and the matching distance in the $1$-dimensional case, when the homology coefficients are taken in a field and hence the comparison can be made.
\end{abstract}

\maketitle

\section*{Introduction}
The extension of the theory of Persistent Homology to the case of persistent homology groups with torsion is an important open problem. Indeed, the lack of a complete representation similar to the one given by persistent diagrams is a relevant obstacle to this development. However,  this fact does not prevent us from using persistent homology groups with torsion for shape comparison. In order to show this, we introduce here a new pseudo-distance $d_T$ between persistent homology groups with coefficients in an Abelian group (hence possibly having torsion), proving that $d_T$ is stable with respect to changes of the filtering functions, measured by the max-norm. We also express this stability as a lower bound for the natural pseudo-distance between topological spaces endowed with $\R^n$-valued filtering functions. This approach opens the way to the use of persistent homology groups in concrete applications, extending some ideas developed in \cite{FrMu99} for size homotopy groups to persistent homology.

Now, let us illustrate how persistent homology groups with torsion can appear in applications. As an example, let us imagine to have to study all the possible ways of grasping an object by a robotic hand endowed with $m$ fingers. Obviously, not all grasps can be considered equivalent. For example, some of them can be difficult to realize in practice, due to the position of the object and the technical capability of the hand. Therefore, it is reasonable to associate each grasp with a ``cost'', represented by a real number. In one of the simplest cases, we can consider two fingers grasping an object whose surface $\mathcal{S}$ is diffeomorphic to a spherical surface, in presence of friction. Each grasp can be seen as an unordered pair $\{p,q\}$ of points on $\mathcal{S}$, and we can consider the set $\mathcal{G}$ of all possible grasps (endowed with the Hausdorff distance between unordered pairs). If the cost of the grasp is given by a value $\varphi_\mathcal{G}(\{p,q\})$, we are naturally led to the problem of comparing the similarity of the grasps possible for two different objects with surfaces $\mathcal{S}$ and $\mathcal{S}'$, with respect to the cost functions $\varphi_\mathcal{G}:\mathcal{G}\to \R$, $\varphi_{\mathcal{G}'}:\mathcal{G}'\to \R$. This can be done by computing the (singular) persistent homology groups of each topological space $\mathcal{G}$ with respect to the filtering function $\varphi_\mathcal{G}$. In other words, we can consider the persistent homology groups ${H}_k^{(\mathcal{G},\p_\mathcal{G})}(u,v)$ given by the equivalence classes of cycles in ${H}_k(\mathcal{G}_v)$ that contain at least one cycle in $\mathcal{G}_u$, where $\mathcal{G}_t$ represents the set of points of $\mathcal{G}$ at which $\p_\mathcal{G}$ takes a value not greater than $t$. It is interesting to note that, in general, this procedure leads to groups with torsion, when the homology coefficients are the integer numbers.

To understand this, let us consider a particularly simple example.
Let us assume that the cost of the grasp is given by the value $\varphi_\mathcal{G}(\{p,q\})=-\|p-q\|$, so relating the cost of a grasp to the level of closure of the robotic hand. For the sake of simplicity, let us examine the case $\mathcal{S}=S^2=\{(x,y,z)\in\R^3:x^2+y^2+z^2=1\}$. In this case, the group ${H}_1^{(\mathcal{G},\p_\mathcal{G})}(u,v)$ has torsion, when $u<v<0$ and $u$ is close enough to zero. Indeed, we can consider the continuous function $f$ taking each unordered pair $\{p,q\}\in \mathcal{G}_v$ to the direction of the line $l$ through $p$ and $q$ (note that $\{p,q\}\in \mathcal{G}_v$ implies $p\neq q$, so that $l$ is well-defined). This direction can be seen as a point in the real projective plane $\mathbb{RP}^2$. Let us observe that, for $v$ close enough to zero, the function $f:\mathcal{G}_v\to \mathbb{RP}^2$ is a homotopy equivalence (the homotopy inverse of $f$ is the function $g:\mathbb{RP}^2\to \mathcal{G}_v$ that takes each direction $w$ to the unordered pair $\{p,q\}$, where $p$ and $q$ are the points at which the line through $(0,0,0)$, having direction $w$, meets $S^2$). Hence $f$ induces an isomorphism between the singular homology groups of $\mathcal{G}_v$ and $\mathbb{RP}^2$. As a consequence, since $H_1(\mathbb{RP}^2)$ has torsion, the persistent homology group ${H}_1^{(\mathcal{G},\p_\mathcal{G})}(u,v)$ has torsion, too, when $u$ is close enough to zero.
For an introduction to Topological Robotics we refer the interested reader to \cite{Fa08}.

\subsection*{Multidimensional persistence}
Persistent homology has turned out to
be a key mathematical method for studying the topology of data,
with applications in an increasing number of fields, ranging from
shape description (e.g., \cite{CaZo*05,CeFeGi06,MoSaSa08,VeUrFrFe93}) to data
simplification \cite{EdLeZo02} and hole detection in sensor networks \cite{DeGh07}. Recent surveys on the topic include \cite{EdHa08,EdHa09,Gh08,Zo05}.
Persistent homology describes topological events occurring through
the filtration of a topological space $X$ (e.g., creation, merging, cancellation of
connected components, tunnels, voids). Filtrations are usually
expressed by real functions $\p:X\to\R$ called {\em filtering
functions}. The main idea underlying this approach is that the
most important piece of information enclosed in geometrical data
is usually the one that is ``\emph{persistent}'' with respect to
the defining parameters.

Until recently, research on persistence has mainly focused on the use of scalar functions for describing filtrations.
The extent to which this theory can be generalized to a situation
in which two or more functions characterize the data is currently
under investigation \cite{BiCeFrGiLa08,CaDiFe10,CaLa11,CaSiZo09,CaZo09}. This
generalization to vector-valued functions is usually known as the
\emph{Multidimensional Persistence Theory}, where the adjective
multidimensional refers to the fact that filtering functions are
vector-valued, and has no connections with the dimensionality of
the space under study. The use of vector-valued filtering
functions in this context
enables the analysis of richer
data structures.

An important topic in current research about
multidimensional persistent homology is the \emph{stability
problem}. In plain words, we need to determine how the computation
of invariants in this theory is affected by the unavoidable
presence of noise and approximation errors. Indeed, it is clear
that any data acquisition is subject to perturbations and, if
persistent homology were not stable, then distinct computational
investigations of the same object could produce completely
different results. Obviously, this would make it impossible to use
such a mathematical theory in real applications.

Up to our knowledge, no theoretical result is available for the metric comparison of persistent homology groups in presence of torsion, at the time we are writing.

\subsection*{Prior works}
The problem of stability in persistent homology has been studied
by Cohen-Steiner, Edelsbrunner and Harer in \cite{CoEdHa07} for
scalar filtering functions. By using a descriptor called a
\emph{persistence diagram}, they prove that persistent Betti
numbers are stable under perturbations of filtering functions with
respect to the max-norm, provided that the considered filtering
functions are \emph{tame}. The same problem is studied in
\cite{CoEdHaMi10} for tame Lipschitz functions. In \cite{ChCo*09},
Chazal {\em et al.} use the concept of \emph{persistence
module} and prove stability under the assumption that it is
finite-dimensional. The problem of stability for scalar filtering
functions is also approached in \cite{dAFrLa10}, where it is solved
by assuming that the considered filtering functions are no more
than continuous, but only for the 0th homology.

Multidimensional persistence was firstly investigated in
\cite{FrMu99} as regards homotopy groups, and by Carlsson and
Zomorodian in \cite{CaZo07} as regards homology modules. In this
context, the first stability result has been obtained for the 0th
homology in \cite{BiCeFrGiLa08}: A distance between the 0th persistent
Betti numbers, also called \emph{size functions}, has been
introduced and proven to be  stable under perturbations of
continuous vector-valued filtering functions. Such a result has
been partially extended in \cite{CaDiFe10} for all homology
degrees, under the restrictive assumption that the vector-valued
filtering functions are \emph{max-tame}.

\subsection*{Contributions}
In this paper we introduce a new pseudo-distance $d_T$ between persistent homology groups, applicable also in the case that these groups have torsion (Theorem~\ref{isapseudodistance}). Moreover, we prove that $d_T$ is stable with respect to perturbation of the filtering function, measured by the max-norm (Corollary~\ref{cor2}). This result is a consequence of a new lower bound for the natural pseudo-distance $\delta$ (Theorem~\ref{bound}), which also implies an easily computable lower bound for $\delta$ (Corollary~\ref{cor1}).
Furthermore, we investigate the link between $d_T$ and the matching distance $d_{match}$ between persistent diagrams, in the particular case that the filtering functions take values in $\R$ and the coefficients are taken in a field, so that also $d_{match}$ can be applied (Theorem~\ref{dtlessthandelta}).

The structure of the paper is as follows. In Section~\ref{basics} we recall the definition of persistent homology group and show some examples.
In Section~\ref{results} the definition of $d_T$ is given and our results are proven.

\section{Some definitions and properties in multidimensional persistence}
\label{basics}

In this paper,
the following relations $\preceq$ and $\prec$ are defined in
$\R^n$: for $\vec u=(u_1,\dots,u_n)$ and $\vec v=(v_1,\dots,v_n)$,
we say $\vec u\preceq\vec v$ (resp. $\vec u\prec\vec v$) if and
only if $u_i\leq\ v_i$ (resp. $u_i<v_i$) for every index
$i=1,\dots,n$. Moreover, $\R^n$ is endowed with the usual
$\max$-norm: $\left\|(u_1,u_2,\dots,u_n)\right\|_{\infty}=\max_{1\leq i\leq
n}|u_i|$.

We shall use the following notations: $\Delta^+$ will be the open
set $\{(\vec u,\vec v)\in\R^n\times\R^n:\vec u\prec\vec v\}$.
For every $n$-tuple $\vec
u=(u_1,\dots,u_n)\in\R^n$ and for every function
$\vec\p:X\to\R^n$, we shall denote by $X\langle\fr\preceq \vec
u\,\rangle$ the set $\{x\in X:\varphi_i(x)\leq u_i,\
i=1,\dots,n\}$. If $X$ is a compact topological space and $\vec\p:X\to\R^n$ is a continuous function, we shall set $\|\vec \varphi\|_\infty=\max_{x\in X}\|\vec \varphi(x)\|_\infty$.

Now we can recall the definition of multidimensional persistent homology group.

\begin{definition}[Persistent homology group]\label{PersHom}
Let $k\in\mathbb{Z}$. Let $X$ be a topological space, and $\fr:X\to\R^n$ a continuous
function. Let $i_*:{H}_k(X\langle\vec\p\preceq\vec
u\rangle)\rightarrow {H}_k(X\langle\vec\p\preceq\vec
v\rangle)$ be the homomorphism induced by the inclusion map
$i:X\langle\vec\p\preceq\vec
u\rangle\hookrightarrow X\langle\vec\p\preceq\vec v\rangle$ with
$\vec u\preceq\vec v$, where ${H}_k$ denotes the $k$th
homology group with coefficients in an Abelian group $G$. If $\vec u\prec\vec v$, the image of
$i_*$ is called the {\em multidimensional
$k$th persistent homology group of $(X,\vec\p)$ at $(\vec u, \vec
v)$}. We shall denote it by the symbol ${H}_k^{(X,\vec\p)}(\vec u, \vec v)$.
The function ${H}_k^{(X,\vec\p)}$ is called the {\em multidimensional
$k$th persistent homology group of $(X,\vec\p)$}.
\end{definition}

In other words, the group ${H}_k^{(X,\vec\p)}(\vec u, \vec v)$ contains all and only the homology classes of
cycles born before or at $\vec u$ and still alive at $\vec v$. We observe that, up to Definition~\ref{PersHom}, each persistent homology group is a function ${H}_k^{(X,\vec\p)}$, taking each pair $(\vec u,\vec v)\in \Delta^+$ to a group.

In the rest of this paper, the superscript arrow will be omitted when the filtering function takes values in $\R$.

\begin{remark}\label{rem_hom}
The choice of the particular homology we use is not essential in this context, in the sense that the results proven in this paper do not depend on this choice. However, while we will not further discuss this point, we recall that some important properties of persistent homology groups depend on the homology that is considered. As an example, the use of \v{C}ech homology with real coefficients allows us to assume that the function $\beta_\varphi:\Delta^+\to \mathbb{N}\cup \{\infty\}$ that takes each pair $(\vec u,\vec v)\in \Delta^+$ to the rank of the multidimensional $k$th persistent homology group of $(X,\vec\p)$ at $(\vec u, \vec v)$, is right-continuous in both its variables (cf. \cite{CeDiFeFrLa09}).
\end{remark}

\begin{example}\label{ex_groups} Let $\mathbb{RP}^2$ be the real projective plane, represented by the unordered pairs $\{p,-p\}$ of opposite points of $S^2=\{(x,y,z)\in\R^3:x^2+y^2+z^2=1\}$. Let us consider the filtering functions $\varphi,\psi:\mathbb{RP}^2\to\R$, defined by setting
$\varphi(\{(x,y,z),-(x,y,z)\})=|z|$ and $\psi(\{(x,y,z),-(x,y,z)\})=2|z|$. Let us also consider the filtering function $\chi:S^2\to\R$, defined by setting $\chi(x,y,z)=|z|$. For each $(u,v)\in \Delta^+$, the persistent homology groups in degree $1$ computed at $(u,v)$ of $\varphi$, $\chi$ and $\psi$ are displayed in Figure~\ref{groups}, from left to right. The coefficients of the homology are taken in $\mathbb{Z}$.
\end{example}

\begin{figure}[h]
\begin{center}
\includegraphics[width=14cm]{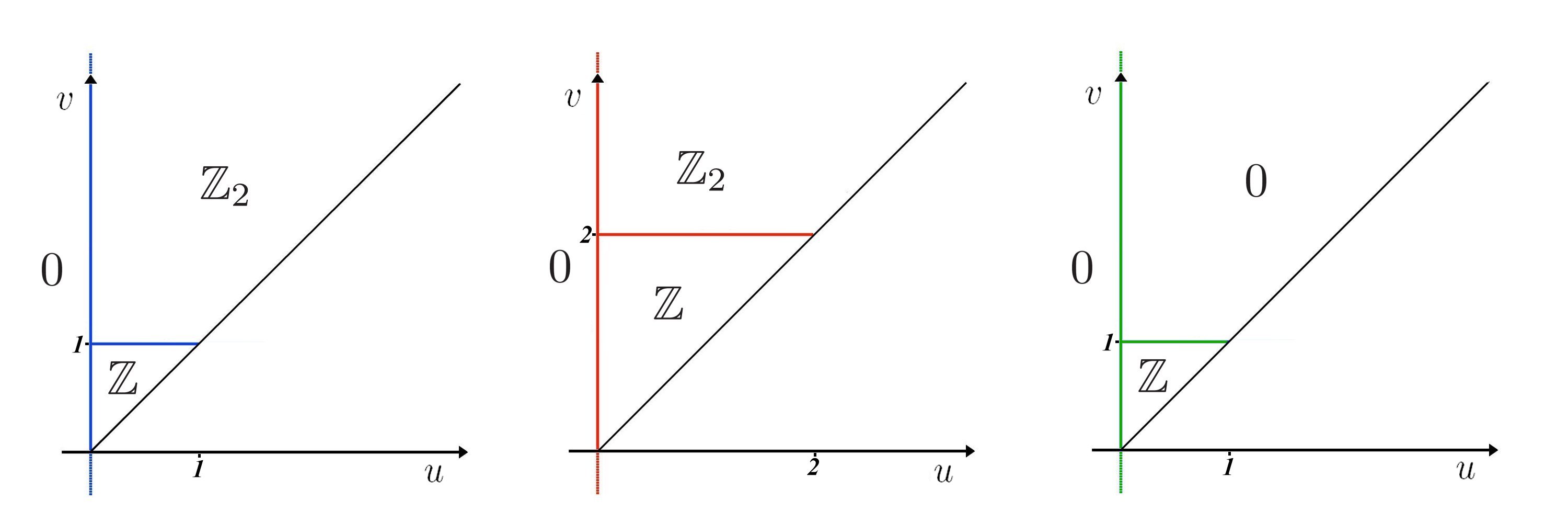}
\caption{Three examples of persistent homology groups with integer coefficients, in degree $1$. In the first and second case the topological space is the projective plane, represented by the unordered pairs $\{p,-p\}$ of opposite points of $S^2$, while in the third case the topological space is $S^2$. The filtering functions are $\varphi(\{(x,y,z),-(x,y,z)\})=|z|$, $\psi(\{(x,y,z),-(x,y,z)\})=2|z|$ and $\chi(x,y,z)=|z|$, respectively. In each region, the persistent homology group at the points of that region is displayed. The symbol $0$ denotes the trivial group.} \label{groups}
\end{center}
\end{figure}

Since we do not take coefficients in a field, the homology groups we consider have torsion, in the general case. As a consequence, they cannot be described by persistence diagrams (cf. \cite{CoEdHa07}). It follows that the classical matching distance cannot be applied, if we are interested in the information given by the torsion part of our groups. However, we can define a pseudo-distance that does not suffer from this limitation. We shall do this in the next section.

\section{The pseudo-distance $d_T$ and the proof of its stability}
\label{results}

In the following, $Ab$ will represent the collection of all Abelian groups, considered up to isomorphisms.

\begin{definition}\label{distancedef}
Let $X$, $Y$ be two topological spaces, and $\vec\varphi:X\to\R^n$, $\vec\psi:Y\to\R^n$ two continuous filtering functions. Let $ H^{(X,\vec\varphi)}_k:\Delta^+\to Ab$ and $ H^{(Y,\vec\psi)}_k:\Delta^+\to Ab$ the multidimensional persistent $k$-homology groups associated with the pairs $(X,\vec\varphi)$ and $(Y,\vec\psi)$, respectively. We assume that the homology coefficients are taken in an Abelian group. Let us consider the set $E$ of all $\epsilon\ge 0$ such that, setting $\vec\epsilon=(\epsilon,\ldots,\epsilon)\in \mathbb{R}^n$, the following statements hold for each $(\vec u,\vec v)\in \Delta^+$:
\begin{enumerate}
\item a surjective homomorphism from a subgroup of $ H^{(X,\vec\varphi)}_k(\vec u,\vec v)$ onto $ H^{(Y,\vec\psi)}_k(\vec u-\vec \epsilon,\vec v+\vec \epsilon)\}$ exists;
\item a surjective homomorphism from a subgroup of $ H^{(Y,\vec\psi)}_k(\vec u,\vec v)$ onto $ H^{(X,\vec\varphi)}_k(\vec u-\vec \epsilon,\vec v+\vec \epsilon)\}$ exists.
\end{enumerate}
We define $d_T\left( H^{(X,\vec\varphi)}_k, H^{(Y,\vec\psi)}_k\right)$ equal to $\inf E$ if $E$ is not empty, and equal to $\infty$ otherwise.
\end{definition}

Before proceeding, we recall that the term ``pseudo-distance'' means that the considered function verifies all the properties of a distance, with the possible exception of the one assuring that two elements having a vanishing distance must coincide. The term ``extended'' refers to the possibility that the function takes the value $\infty$.

\begin{theorem}\label{isapseudodistance}
The function $d_T$ is an extended pseudo-distance.
\end{theorem}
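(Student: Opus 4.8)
The plan is to verify the three pseudo-distance axioms for $d_T$: non-negativity (which is immediate from the definition, since $E\subseteq[0,\infty)$), symmetry, and the triangle inequality. Throughout, the key structural feature to exploit is that conditions (1) and (2) in Definition~\ref{distancedef} are perfectly symmetric in the two pairs $(X,\vec\varphi)$ and $(Y,\vec\psi)$: swapping the roles of the two spaces simply interchanges (1) and (2). Hence the set $E$ associated with the ordered pair $\left(H^{(X,\vec\varphi)}_k, H^{(Y,\vec\psi)}_k\right)$ coincides with the one associated with $\left(H^{(Y,\vec\psi)}_k, H^{(X,\vec\varphi)}_k\right)$, and taking infima gives $d_T\left(H^{(X,\vec\varphi)}_k, H^{(Y,\vec\psi)}_k\right)=d_T\left(H^{(Y,\vec\psi)}_k, H^{(X,\vec\varphi)}_k\right)$ at once. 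I should also record the (trivial) reflexivity statement $d_T\left(H^{(X,\vec\varphi)}_k, H^{(X,\vec\varphi)}_k\right)=0$, witnessed by taking $\epsilon=0$ and using the identity map, which is a surjective homomorphism from the whole group onto itself.

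The substantive part is the triangle inequality, and this is where I expect the main effort to lie. Suppose $\epsilon_1\in E_{XY}$ witnesses a bound for the pair $(X,\vec\varphi),(Y,\vec\psi)$ and $\epsilon_2\in E_{YZ}$ witnesses a bound for $(Y,\vec\psi),(Z,\vec\chi)$. I would like to show that $\epsilon_1+\epsilon_2$ belongs to the corresponding set for $(X,\vec\varphi),(Z,\vec\chi)$, so that $\inf$ gives $d_T(X,Z)\le d_T(X,Y)+d_T(Y,Z)$. Fix $(\vec u,\vec v)\in\Delta^+$. Condition (1) for $\epsilon_1$ gives a surjection from a subgroup of $H^{(X,\vec\varphi)}_k(\vec u,\vec v)$ onto $H^{(Y,\vec\psi)}_k(\vec u-\vec\epsilon_1,\vec v+\vec\epsilon_1)$; condition (1) for $\epsilon_2$, applied at the shifted pair $(\vec u-\vec\epsilon_1,\vec v+\vec\epsilon_1)$, gives a surjection from a subgroup of $H^{(Y,\vec\psi)}_k(\vec u-\vec\epsilon_1,\vec v+\vec\epsilon_1)$ onto $H^{(Z,\vec\chi)}_k(\vec u-\vec\epsilon_1-\vec\epsilon_2,\vec v+\vec\epsilon_1+\vec\epsilon_2)$. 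The task is then to compose these two relations into a single surjective homomorphism from a subgroup of $H^{(X,\vec\varphi)}_k(\vec u,\vec v)$ onto $H^{(Z,\vec\chi)}_k(\vec u-\vec\epsilon_1-\vec\epsilon_2,\vec v+\vec\epsilon_1+\vec\epsilon_2)$.

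The delicate point is that a surjection from a \emph{subgroup} does not compose naively, so I would argue as follows. Let $f\colon A\to B$ be the first surjection, with $A$ a subgroup of $H^{(X,\vec\varphi)}_k(\vec u,\vec v)$ and $B=H^{(Y,\vec\psi)}_k(\vec u-\vec\epsilon_1,\vec v+\vec\epsilon_1)$, and let $g\colon B'\to C$ be the second surjection, with $B'$ a subgroup of $B$ and $C=H^{(Z,\vec\chi)}_k(\vec u-\vec\epsilon_1-\vec\epsilon_2,\vec v+\vec\epsilon_1+\vec\epsilon_2)$. Set $A'=f^{-1}(B')$, a subgroup of $A$ and hence of $H^{(X,\vec\varphi)}_k(\vec u,\vec v)$. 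The restriction $f|_{A'}\colon A'\to B'$ is still surjective because $f$ maps onto all of $B\supseteq B'$, so every element of $B'$ has a preimage, which by definition lies in $A'$. Therefore $g\circ (f|_{A'})\colon A'\to C$ is a composition of two surjective homomorphisms, hence a surjective homomorphism, giving exactly condition (1) for $\epsilon_1+\epsilon_2$. The identical preimage argument applied to condition (2) (composing in the reverse order, from $Z$ through $Y$ to $X$) yields condition (2) for $\epsilon_1+\epsilon_2$. This shows $\epsilon_1+\epsilon_2\in E_{XZ}$ whenever $\epsilon_1\in E_{XY}$ and $\epsilon_2\in E_{YZ}$; passing to infima over all such $\epsilon_1,\epsilon_2$ (and handling the case where either set is empty, in which case the corresponding distance is $\infty$ and the inequality holds trivially) completes the triangle inequality and the proof.
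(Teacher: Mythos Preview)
Your proposal is correct and follows essentially the same route as the paper: non-negativity and symmetry are immediate from the definition, reflexivity comes from the identity map with $\epsilon=0$, and the triangle inequality is obtained by the same preimage-and-compose argument (setting $A'=f^{-1}(B')$ and composing $g\circ f|_{A'}$). The only cosmetic difference is bookkeeping: you fix witnesses $\epsilon_1\in E_{XY}$, $\epsilon_2\in E_{YZ}$, show $\epsilon_1+\epsilon_2\in E_{XZ}$, and then pass to infima, whereas the paper starts from the infimum values and introduces an auxiliary slack $\eta>0$, obtaining $d_T\le\epsilon_1+\epsilon_2+2\eta$ and letting $\eta\to 0$; your formulation is arguably cleaner since it sidesteps any implicit appeal to upward closure of the sets $E$.
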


\begin{proof}
The following statements hold:
\begin{description}
\item[$i)$] $d_T$ is non-negative by definition;
\item[$ii)$]The equality $d_T\left( H^{(X,\vec\varphi)}_k, H^{(X,\vec\varphi)}_k\right)=0$ is obtained by taking $\epsilon=0$ and considering the identical homomorphism $id: H^{(X,\vec\varphi)}_k(\vec u,\vec v)\to H^{(X,\vec\varphi)}_k(\vec u-\vec \epsilon,\vec v+\vec \epsilon)$;
\item[$iii)$]$d_T$ is symmetrical by definition;
\item[$iv)$]Assume that $d_T\left( H^{(X,\vec\varphi)}_k, H^{(Y,\vec\psi)}_k\right)=\epsilon_1$ and $d_T\left( H^{(Y,\vec\psi)}_k, H^{(Z,\vec\chi)}_k\right)=\epsilon_2$. Let $\eta$ be an arbitrarily small positive real number. The definition of $d_T$ implies that for each $(\vec u,\vec v)\in \Delta^+$ a surjective homomorphism $f$ from a subgroup $F$ of $ H^{(X,\vec\varphi)}_k(\vec u,\vec v)$ onto $ H^{(Y,\vec\psi)}_k(\vec u-(\vec \epsilon_1+\vec\eta),\vec v+(\vec \epsilon_1+\vec\eta))$, and a surjective homomorphism $g$ from a subgroup $G$ of $ H^{(Y,\vec\psi)}_k(\vec u-(\vec \epsilon_1+\vec\eta),\vec v+(\vec \epsilon_1+\vec\eta))$ onto $ H^{(Z,\vec\chi)}_k(\vec u-(\vec \epsilon_1+\vec\eta)-(\vec \epsilon_2+\vec\eta),\vec v+(\vec \epsilon_1+\vec\eta)+(\vec \epsilon_2+\vec\eta))$ exist. Now, we can consider the subgroup $\tilde{F}=f^{-1}(G)\subseteq  H^{(X,\vec\varphi)}_k(\vec u,\vec v)$. Obviously, the function $g_{|G}\circ f_{|\tilde{F}}$ is a surjective homomorphism from the subgroup $\tilde{F}$ of $ H^{(X,\vec\varphi)}_k(\vec u,\vec v)$ onto $ H^{(Z,\vec\chi)}_k(\vec u-(\vec \epsilon_1+\vec\eta)-(\vec \epsilon_2+\vec\eta),\vec v+(\vec \epsilon_1+\vec\eta)+(\vec \epsilon_2+\vec\eta))$.
    Analogously, we can construct a surjective homomorphism from a subgroup of $ H^{(Z,\vec\chi)}_k(\vec u,\vec v)$ onto $ H^{(X,\vec\varphi)}_k(\vec u-(\vec \epsilon_1+\vec\eta)-(\vec \epsilon_2+\vec\eta),\vec v+(\vec \epsilon_1+\vec\eta)+(\vec \epsilon_2+\vec\eta))$.
    This implies that $d_T\left( H^{(X,\vec\varphi)}_k, H^{(Z,\vec\chi)}_k\right)\le\epsilon_1+\epsilon_2+2\eta$. Since $\eta$ can be taken arbitrarily small, it follows that $d_T\left( H^{(X,\vec\varphi)}_k, H^{(Z,\vec\chi)}_k\right)\le\epsilon_1+\epsilon_2$, so proving the triangle inequality for $d_T$.
\end{description}
\end{proof}

As a simple example, we can easily check that $d_T$ takes the value $1$ between the first two persistent homology groups represented in Figure~\ref{groups}, and the value $\infty$ between the first and the third and between the second and the third persistent homology groups represented in the same figure.

\begin{remark}\label{rem_pd}
We observe that the pseudo-distance $d_T$ is not a distance. For example, we can consider the two Abelian groups $\bigoplus_{i=1}^\infty \mathbb{Z}$ (obtained by adding infinite copies of the integer numbers) and  $\mathbb{Z}_2\oplus\bigoplus_{i=1}^\infty \mathbb{Z}$. We can find two topological spaces $X$, $Y$ such that their homology groups in degree $1$ are these two groups, respectively. If we take the two filtering functions $\varphi:X\to\R$, $\psi:Y\to\R$ with $\varphi\equiv\psi\equiv 0$, we have that $H^{(X,\varphi)}_k(u,v)=\bigoplus_{i=1}^\infty \mathbb{Z}$ and $H^{(Y,\psi)}_k(u,v)= \mathbb{Z}_2\oplus\bigoplus_{i=1}^\infty \mathbb{Z}$ for any $(u,v)\in \Delta^+$ with $u\ge 0$. Obviously, if $(u,v)\in \Delta^+$ with $u< 0$ we get that both $H^{(X,\varphi)}_k(u,v)$ and $H^{(Y,\psi)}_k(u,v)$ are the trivial group. In any case, it is easy to find a surjective homomorphism from $ H^{(X,\varphi)}_k(u,v)$ onto $ H^{(Y,\psi)}_k(u,v)$ and a surjective homomorphism from $H^{(Y,\psi)}_k(u,v)$ onto $ H^{(X,\varphi)}_k(u,v)$, so that $d_T\left( H^{(X,\varphi)}_k, H^{(Y,\psi)}_k\right)=0$. However, the groups $H^{(X,\varphi)}_k(u,v), H^{(Y,\psi)}_k(u,v)$ are not isomorphic, for $(u,v)\in \Delta^+$ with $u\ge 0$.
\end{remark}

\subsection{Relationship between $d_T$ and $d_{match}$ in the $1$-dimensional case, if there is no torsion}

Let us go back to the case of filtering functions taking values in $\R$. In this case, if the Abelian group $G$ of homology coefficients is also a field, it is well known that
the persistent homology groups can be described by persistence diagrams, under the assumption that the topological space is triangulable (cf. \cite{CoEdHa07} for tame filtering functions, and \cite{CeDiFeFrLa09} for continuous filtering functions). The distance that is usually used to compare persistence diagrams is the matching distance $d_{match}$.

In plain words, the persistence diagram is the collection of all points $(u,v)$ where $u$ and $v$ represent the ``birth'' and ``death'' of a homology class, varying the sub-level sets of the topological space $X$ with respect to the filtering function $\varphi:X\to \R$. Each one of these pairs $(u,v)$ is endowed with a multiplicity. By definition, all the pairs $(u,u)$ belong to the persistent diagram, each of them counted with infinite multiplicity. The matching distance $d_{match}$ between two persistence diagrams is the infimum of the cost of a matching (i.e. a bijective correspondence) between two persistence diagrams. The cost of a matching $f$ is defined to be the maximum displacement of the points of the persistence diagrams, induced by $f$. Each displacement is measured by the max-norm on the real plane.

For a formal and precise definition of persistence diagram and $d_{match}$, we refer the interested reader to \cite{CoEdHa07,CeDiFeFrLa09}.

It is natural to wonder if there is any relationship between $d_T$ and $d_{match}$, under the hypotheses that we have just made.

We can prove the following result.

\begin{theorem}\label{dtlessthandelta}
Let $X,Y$ be two triangulable spaces endowed with
two continuous functions $\varphi: X \to \R$, $\psi: Y
\to \R$. Let us consider the persistent homology groups $H^{(X,\varphi)}_k$, $H^{(Y,\varphi)}_k$ with homology coefficients in a field.
Let $D_\varphi$ and $D_\psi$ be the persistent diagrams associated with the filtering functions $\varphi$ and $\psi$, respectively.
Then, for any integer $k$, $d_T\left( H^{(X,\varphi)}_k, H^{(Y,\psi)}_k\right)\le d_{match}(D_\varphi,D_\psi)$.
\end{theorem}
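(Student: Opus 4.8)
The plan is to reduce the comparison of the pseudo-distance $d_T$ to the matching distance $d_{match}$ by exploiting the persistence-diagram description, which is available because the coefficients lie in a field and the spaces are triangulable. Set $\delta = d_{match}(D_\varphi, D_\psi)$. It suffices to show that every $\epsilon > \delta$ belongs to the set $E$ of Definition~\ref{distancedef}; then $\inf E \le \delta$ gives the claim. So fix $\epsilon > \delta$. By definition of the matching distance as an infimum over matchings, there exists a bijective matching $\gamma$ between $D_\varphi$ and $D_\psi$ (including the diagonal points of infinite multiplicity) whose cost, measured in the max-norm on the plane, is at most $\epsilon$. This means that each off-diagonal point $(a,b) \in D_\varphi$ is sent either to a point $(a',b') \in D_\psi$ with $\max(|a-a'|,|b-b'|) \le \epsilon$, or to a diagonal point, in which case the vertical distance $(b-a)/2$ from $(a,b)$ to the diagonal is at most $\epsilon$; and symmetrically for $\gamma^{-1}$.

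Next I would translate this geometric matching into the two surjectivity conditions $(1)$ and $(2)$. Fix $(u,v) \in \Delta^+$ (here one-dimensional, so $u<v$ are reals). The rank of $H^{(X,\varphi)}_k(u,v)$ equals the number of points $(a,b) \in D_\varphi$ counted with multiplicity satisfying $a \le u$ and $b > v$, i.e.\ points lying in the upper-left quadrant $Q(u,v) = \{(a,b): a \le u,\ b > v\}$. The same holds for $D_\psi$ and $H^{(Y,\psi)}_k(u-\epsilon, v+\epsilon)$, whose rank counts points of $D_\psi$ in $Q(u-\epsilon, v+\epsilon)$. The key combinatorial observation is that the matching $\gamma$ injectively carries points of $D_\varphi$ lying in $Q(u,v)$ into points of $D_\psi$ lying in the shrunken quadrant $Q(u-\epsilon, v+\epsilon)$: if $(a,b)$ has $a \le u$ and $b > v$ and moves by at most $\epsilon$ in each coordinate, then its image $(a',b')$ satisfies $a' \le a + \epsilon \le u+\epsilon$—wait, here I need the quadrant to shrink, so I want $a' \le u - \epsilon$; the correct reading is that a point of $D_\varphi$ in $Q(u-\epsilon,v+\epsilon)$ maps into $Q(u,v)$ of $D_\psi$, and one must be careful to orient the inclusions so that the quadrant of the \emph{source} is the larger one. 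Concretely, a point of $D_\varphi$ with $a \le u - \epsilon$ and $b > v + \epsilon$ has image $(a',b')$ with $a' \le a+\epsilon \le u$ and $b' \ge b - \epsilon > v$, hence lands in $Q(u,v)$; moreover it cannot be matched to the diagonal, since such a point has distance from the diagonal exceeding $\epsilon$. Therefore $\gamma$ restricts to an injection from the points of $D_\varphi$ in $Q(u-\epsilon,v+\epsilon)$ into the points of $D_\psi$ in $Q(u,v)$, giving $\operatorname{rank} H^{(X,\varphi)}_k(u-\epsilon, v+\epsilon) \le \operatorname{rank} H^{(Y,\psi)}_k(u,v)$. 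Applying this with the roles reversed, and at the shifted arguments, yields the rank inequalities in both directions that condition $(1)$ and $(2)$ require.

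The final step converts each rank inequality into the existence of the required surjective homomorphism from a subgroup. Because the coefficient group is a field, every persistent homology group here is a finite-dimensional vector space, completely determined up to isomorphism by its rank (dimension). Whenever $\operatorname{rank} V \ge \operatorname{rank} W$ for two such vector spaces, there is trivially a surjective linear map from a subgroup (subspace) of $V$ onto $W$: pick any subspace of $V$ of dimension $\dim W$ and map it isomorphically onto $W$. Applying this to the rank inequalities established above produces the surjective homomorphisms demanded by $(1)$ and $(2)$ for the value $\epsilon$, so $\epsilon \in E$. Letting $\epsilon \downarrow \delta$ gives $d_T \le \delta = d_{match}(D_\varphi, D_\psi)$.

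I expect the main obstacle to be bookkeeping in the quadrant-shift argument: one must verify carefully that off-diagonal points near the diagonal (those matched to the diagonal by $\gamma$) are exactly the ones excluded from the shrunken quadrant, and that the $\epsilon$-shifts are applied in the correct direction so that the source quadrant is the larger of the two. Handling the diagonal points of infinite multiplicity, and confirming that the restriction of $\gamma$ is genuinely injective on the relevant finite point sets, is the delicate part; once the rank inequalities are secured, the passage to surjective homomorphisms is immediate thanks to the field hypothesis.
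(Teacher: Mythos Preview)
Your proposal is correct and follows essentially the same route as the paper's proof: establish the rank inequalities $\beta_\psi(u-\epsilon,v+\epsilon)\le\beta_\varphi(u,v)$ and $\beta_\varphi(u-\epsilon,v+\epsilon)\le\beta_\psi(u,v)$ for every $\epsilon>d_{match}$, then use the field hypothesis to upgrade these to the surjective homomorphisms required by Definition~\ref{distancedef}. The only difference is packaging: the paper obtains the rank inequalities by invoking the Representation Theorem of \cite{CeDiFeFrLa09} (equivalently, the $k$-triangle Lemma of \cite{CoEdHa07}), whereas you reprove that step directly via your quadrant-shift argument on the matched diagrams --- which is precisely the content of those cited results.
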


\begin{proof} Since the coefficients are taken in a field, our persistent homology groups have no torsion, and they can be described by their persistent Betti numbers functions $\beta_\varphi$, $\beta_\psi$, where we set
$\beta_\varphi(u,v)=\dim {H}_k^{(X,\varphi)}(u, v)$, $\beta_\psi(u,v)=\dim {H}_k^{(Y,\psi)}(u, v)$, for $(u,v)\in \Delta^+$.
Obviously, for each $k\in\mathbb{Z}$, we have different persistent Betti numbers functions
$\beta_\varphi$ of $\varphi$ (which should be denoted
$\beta_{\varphi,k}$, say) but, for the sake of notational
simplicity,  we omit adding any reference to $k$.

Let us assume that the matching distance $d_{match}$ between our persistence diagrams takes the value $\bar \lambda$. Then, from the Representation Theorem 2.17 in \cite{CeDiFeFrLa09} (cf. also the \emph{$k$-triangle Lemma} in \cite{CoEdHa07}), it follows that for every $\lambda>\bar\lambda$ and every pair $(u,v)\in \Delta^+$ the inequalities $\beta_\varphi(u-\lambda,v+ \lambda)\leq\beta_\psi(u,v)$ and $\beta_\psi(u-\lambda,v+\lambda)\leq\beta_\varphi(u,v)$ hold. This fact immediately implies that for each $(u,v)\in \Delta^+$
\begin{enumerate}
\item a surjective homomorphism from a subgroup of $ H^{(X,\varphi)}_k(u,v)$ onto $ H^{(Y,\psi)}_k(u-\lambda,v+\lambda)\}$ exists;
\item a surjective homomorphism from a subgroup of $ H^{(Y,\psi)}_k(u,v)$ onto $ H^{(X,\varphi)}_k(u-\lambda,v+\lambda)\}$ exists.
\end{enumerate}
Hence $d_T\left( H^{(X,\varphi)}_k, H^{(Y,\psi)}_k\right)\le\lambda$ for any $\lambda>\bar\lambda$. This implies our thesis.
 \end{proof}

Theorem~\ref{dtlessthandelta} leaves open the possibility that $d_T$ and $d_{match}$ coincide when the persistent homology groups have no torsion and hence $d_{match}$ is defined. This does not happen. Indeed, for any natural number $m\ge 2$ we can give an example for which
$d_T\left( H^{(X,\varphi)}_k, H^{(Y,\psi)}_k\right)\le \frac{2}{m}\cdot d_{match}(D_\varphi,D_\psi)$:

\begin{example}\label{ex1}
Let us consider a natural number $m\ge 2$. It is easy to define two triangulable spaces endowed with filtering functions whose persistent diagrams consist of the points  $(0,\infty),(0,m),(1,m+1),\ldots,(i,m+i),\ldots,(m,2m)$ and $(0,\infty),(0,m),(1,m+1),\ldots,(i,m+i),\ldots,(m-1,2m-1)$, respectively. Here all the points have multiplicity one. In Figure \ref{dtdelta} we can see the case $m=4$. The values of the corresponding persistent Betti number functions are displayed in Figure \ref{dtdeltavalues}. It is easy to check that in this case $d_T$ and $d_{match}$ take the value $1$ and $\frac{m}{2}$, respectively.
\end{example}

\begin{figure}[h]
\begin{center}
\includegraphics[width=14cm]{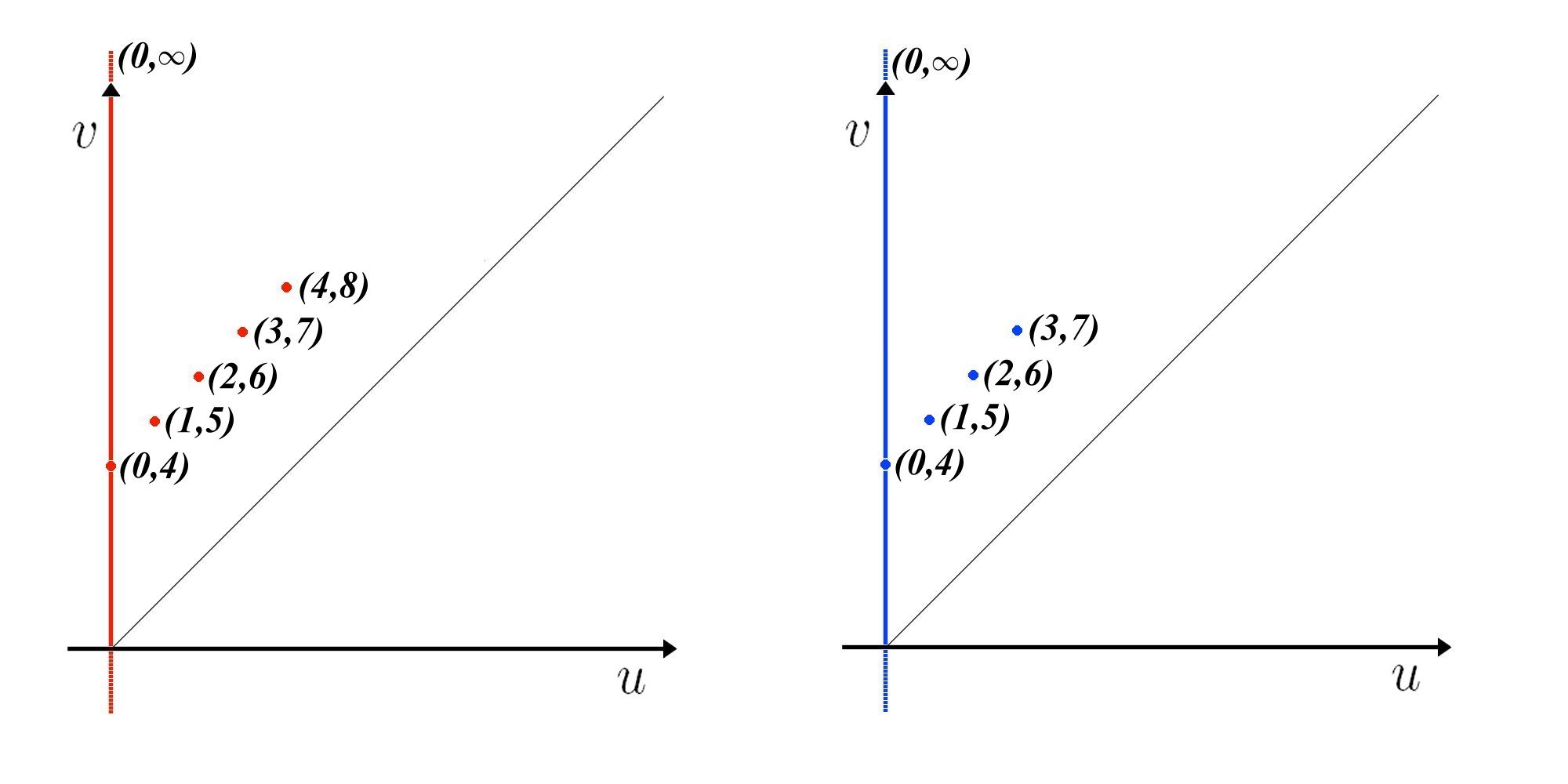}
\caption{It is easy to check that the matching distance $d_{match}$ between the two persistent diagrams displayed in this figure equals $2$.}\label{dtdelta}
\end{center}
\end{figure}

\begin{figure}[h]
\begin{center}
\includegraphics[width=14cm]{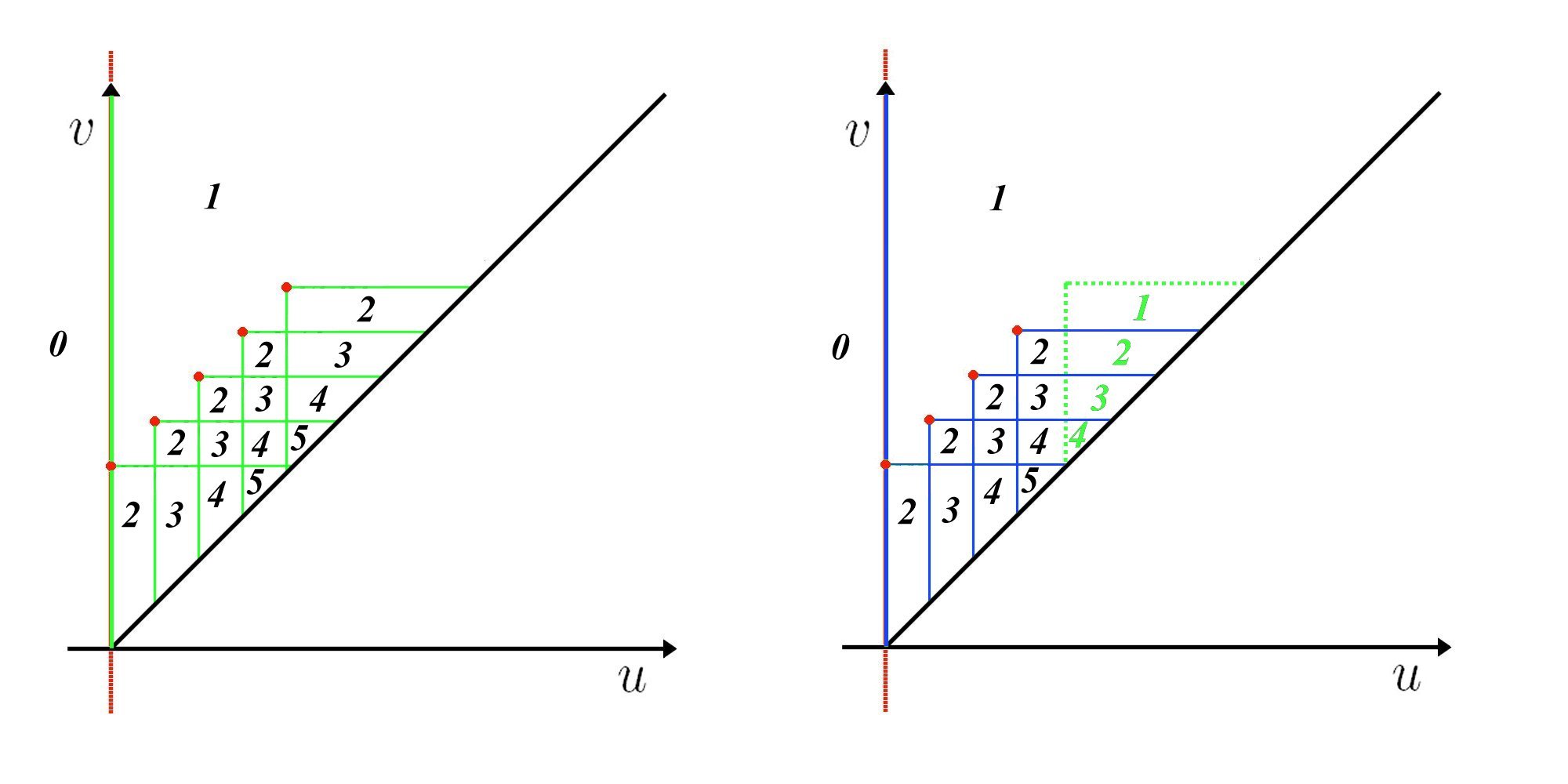}
\caption{Values of the persistent Betti numbers functions represented by the persistent diagrams in Figure \ref{dtdelta}. In the picture on the right, the dotted triangle represents the part of domain where the persistent Betti numbers functions differ from each other. It is easy to check that the pseudo-distance $d_T$ between the persistent homology groups described by the two persistent diagrams displayed in Figure~\ref{dtdelta} equals $1$.}\label{dtdeltavalues}
\end{center}
\end{figure}

Theorem~\ref{dtlessthandelta} and Example~\ref{ex1} show that the pseudo-distance $d_T$ is strictly less informative than the matching distance $d_{match}$.
However, we recall that $d_T$ can be applied also in presence of torsion, while $d_{match}$ cannot.

\subsection{Stability of the pseudo-distance $d_T$}
Another relevant reason to study the pseudo-distance $d_T$
is its stability. This stability can be expressed as a lower bound
for the natural pseudo-distance $\delta$.
Before proceeding, let us recall the definition of $\delta$.

\begin{definition}[Natural pseudo-distance]
Let $X,Y$ be two compact topological spaces  endowed with
two continuous functions $\vec \varphi: X \to \R^n$, $\vec \psi: Y
\to \R^n$. The \emph{natural pseudo-distance} between the pairs $(X,\vec
\varphi)$ and $(Y,\vec \psi)$, denoted by
$\delta\left((X,\vec\varphi),(Y,\vec\psi)\right)$,  is
\begin{itemize}
\item[(i)] the number $\inf_{h}\|\vec
\varphi-\vec \psi \circ h\|_\infty$ where $h$ varies in the set
of all the homeomorphisms between $X$ and $Y$, if $X$ and
$Y$ are homeomorphic;
\item[(ii)] $\infty$, if $X$ and $Y$ are
not homeomorphic.
\end{itemize}
\end{definition}

We point out that the natural pseudo-distance is not a distance
because it can vanish on two distinct pairs. However, it is
symmetric, satisfies the triangular inequality, and vanishes on
two equal pairs.

The natural pseudo-distance has been studied in
\cite{DoFr04,DoFr07,DoFr09} in the case of scalar-valued filtering
functions on manifolds, and in \cite{FrMu99} in the case of
vector-valued filtering functions on manifolds.

We are now ready to show that the pseudo-distance $d_T$ is stable.
We begin by proving a useful lemma.

\begin{lemma}\label{lemmahom}
Let $X,Y$ be two homeomorphic compact topological spaces  endowed with
two continuous functions $\vec \varphi: X \to \R^n$, $\vec \psi: Y
\to \R^n$.
Assume that $h:X\to Y$ is a homeomorphism such that $\|\vec\varphi-\vec\psi\circ h\|_\infty\le \epsilon$. Then for every pair $(\vec u,\vec v)\in \Delta^+$ a subgroup $S$ of $H^{(X,\vec\varphi)}_k(\vec u,\vec v)$ and a surjective homomorphism $f:S\to  H^{(Y,\vec\psi)}_k(\vec u-\vec \epsilon,\vec v+\vec \epsilon)$ exist.
\end{lemma}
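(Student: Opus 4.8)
The plan is to convert the hypothesis on the max-norm into set-theoretic inclusions between sublevel sets and then to run a short diagram chase built on the functoriality of homology. First I would unwind the inequality $\|\vec\varphi-\vec\psi\circ h\|_\infty\le\epsilon$: it says exactly that $|\varphi_i(x)-\psi_i(h(x))|\le\epsilon$ for every $x\in X$ and every index $i$. From $\psi_i(h(x))\le\varphi_i(x)+\epsilon$ one gets that $h$ maps $X\langle\vec\varphi\preceq\vec u\rangle$ into $Y\langle\vec\psi\preceq\vec u+\vec\epsilon\rangle$, and symmetrically from $\varphi_i(x)\le\psi_i(h(x))+\epsilon$ one gets that $h^{-1}$ maps $Y\langle\vec\psi\preceq\vec w\rangle$ into $X\langle\vec\varphi\preceq\vec w+\vec\epsilon\rangle$ for every $\vec w$. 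Taking $\vec w=\vec u-\vec\epsilon$, the map $h^{-1}$ restricts to a continuous map $Y\langle\vec\psi\preceq\vec u-\vec\epsilon\rangle\to X\langle\vec\varphi\preceq\vec u\rangle$, and $h$ restricts to a continuous map $X\langle\vec\varphi\preceq\vec v\rangle\to Y\langle\vec\psi\preceq\vec v+\vec\epsilon\rangle$.

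The crucial observation is that the inclusion $Y\langle\vec\psi\preceq\vec u-\vec\epsilon\rangle\hookrightarrow Y\langle\vec\psi\preceq\vec v+\vec\epsilon\rangle$ factors through $X$ as
$$Y\langle\vec\psi\preceq\vec u-\vec\epsilon\rangle\xrightarrow{h^{-1}}X\langle\vec\varphi\preceq\vec u\rangle\hookrightarrow X\langle\vec\varphi\preceq\vec v\rangle\xrightarrow{h}Y\langle\vec\psi\preceq\vec v+\vec\epsilon\rangle,$$
where the first and last arrows are well-defined by the inclusions established above, and the composite is literally the inclusion because $h\circ h^{-1}$ is the identity. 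Applying the $k$th homology functor, and writing $j_*$ for the map induced by $X\langle\vec\varphi\preceq\vec u\rangle\hookrightarrow X\langle\vec\varphi\preceq\vec v\rangle$, I would then set $S:=j_*\bigl((h^{-1})_*\,H_k(Y\langle\vec\psi\preceq\vec u-\vec\epsilon\rangle)\bigr)$ and take $f$ to be the restriction of $h_*$ to $S$.

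It remains to check the two required properties. By construction $S\subseteq\im j_*=H_k^{(X,\vec\varphi)}(\vec u,\vec v)$, so $S$ is a subgroup of the source group. For surjectivity, functoriality applied to the factorization gives that $h_*\circ j_*\circ(h^{-1})_*$ is exactly the homomorphism induced by the inclusion $Y\langle\vec\psi\preceq\vec u-\vec\epsilon\rangle\hookrightarrow Y\langle\vec\psi\preceq\vec v+\vec\epsilon\rangle$, whose image is by definition $H_k^{(Y,\vec\psi)}(\vec u-\vec\epsilon,\vec v+\vec\epsilon)$. Hence $f(S)=h_*(S)=\bigl(h_*\circ j_*\circ(h^{-1})_*\bigr)\bigl(H_k(Y\langle\vec\psi\preceq\vec u-\vec\epsilon\rangle)\bigr)=H_k^{(Y,\vec\psi)}(\vec u-\vec\epsilon,\vec v+\vec\epsilon)$, so $f$ is a well-defined surjective homomorphism onto the target group, as desired.

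I expect the only genuinely delicate point to be that $h$ restricts to an \emph{injective but generally non-surjective} map between sublevel sets, so $h_*$ is not an isomorphism and one cannot simply transport images along it. The remedy, and the reason the argument works, is to define $S$ as the image through $X$ of the full composite rather than as an arbitrary preimage: the identity $h\circ h^{-1}=\mathrm{id}$ then forces the composite to realize precisely the desired inclusion-induced map. This is also where the hypothesis that $h$ is a homeomorphism is essential, since the continuity of $h^{-1}$ is what supplies the induced map $(h^{-1})_*$ in the reverse direction.
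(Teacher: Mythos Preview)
Your proof is correct and in fact cleaner than the paper's. The paper argues at the chain level: it defines $S$ as the set of classes in $H_k^{(X,\vec\varphi)}(\vec u,\vec v)$ admitting a representative cycle $\gamma$ with $h_*(\gamma)$ supported in $Y\langle\vec\psi\preceq\vec u-\vec\epsilon\rangle$, checks by hand that $S$ is closed under sum and inverse, then defines $f(a)=[h_*(\gamma_a)]$ for a \emph{chosen} representative $\gamma_a$ and verifies well-definedness, additivity, and surjectivity via explicit chain computations. Your argument packages the same content functorially: once you observe that the inclusion $Y\langle\vec\psi\preceq\vec u-\vec\epsilon\rangle\hookrightarrow Y\langle\vec\psi\preceq\vec v+\vec\epsilon\rangle$ factors as $h\circ j\circ h^{-1}$, the subgroup $S=j_*(h^{-1})_*H_k(Y\langle\vec\psi\preceq\vec u-\vec\epsilon\rangle)$ and the map $f=h_*|_S$ come for free as images and restrictions of honest homomorphisms, so no well-definedness or additivity checks are needed. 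The two $S$'s coincide (a class lies in the paper's $S$ exactly when it is represented by a cycle supported in $h^{-1}(Y\langle\vec\psi\preceq\vec u-\vec\epsilon\rangle)$, which is the image of $(h^{-1})_*$ followed by $j_*$), so the arguments are equivalent in content; yours simply exploits functoriality to avoid the bookkeeping.
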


\begin{proof}\label{prooflemmahom}
The homeomorphism $h$ induces a chain isomorphism $h_*$. Obviously, $h_*$ takes cycles to cycles and boundaries to boundaries.  Let us consider the set $S$ of the elements of $ H^{(X,\vec\varphi)}_k(\vec u,\vec v)$ that contain at least one cycle whose image by $h_*$ is a cycle in $Y\langle\vec\psi\preceq \vec u-\vec\epsilon\rangle$. We claim that $S$ is a subgroup of $ H^{(X,\vec\varphi)}_k(\vec u,\vec v)$. Indeed, if $a_1,a_2\in S$ then two cycles $\gamma_1\in a_1$, $\gamma_2\in a_2$ exist, such that $h_*(\gamma_1), h_*(\gamma_2)$ are cycles in $Y\langle\vec\psi\preceq \vec u-\vec\epsilon\rangle$. Hence the cycle $h_*(\gamma_1)+h_*(\gamma_2)=h_*(\gamma_1+\gamma_2)$ is a cycle in $Y\langle\vec\psi\preceq \vec u-\vec\epsilon\rangle$, too. Since $\gamma_1+\gamma_2\in a_1+a_2$, it follows that the equivalence class $a_1+a_2$ belongs to $S$. That means that $S$ is closed with respect to the sum.

Furthermore,
if $a\in S$ then a cycle $\gamma\in a$ exists, such that $h_*(\gamma)$ is a cycle in $Y\langle\vec\psi\preceq \vec u-\vec\epsilon\rangle$. Hence the cycle $h_*(-\gamma)=-h_*(\gamma)$ is a cycle in $Y\langle\vec\psi\preceq \vec u-\vec\epsilon\rangle$, too. Since $-\gamma\in -a$, it follows that the equivalence class $-a$ belongs to $S$. That means that $S$ is closed with respect to the computation of the opposite.

In summary, $S$ is a subgroup of $ H^{(X,\vec\varphi)}_k(\vec u,\vec v)$.

Now, let us define the homomorphism $f:S\to  H^{(Y,\vec\psi)}_k(\vec u-\vec \epsilon,\vec v+\vec \epsilon)$. In the following, for each $a\in S$, let us fix a cycle $\gamma_a\in a$ such that $h_*(\gamma_a)$ is a cycle in $Y\langle\vec\psi\preceq \vec u-\vec \epsilon\rangle$. For every $a\in S$ we define $f(a)$ as the element of $ H^{(Y,\vec\psi)}_k(\vec u-\vec \epsilon,\vec v+\vec \epsilon)$ that contains $h_*(\gamma_a)$ (i.e., in symbols, $[h_*(\gamma_a)]$).

We claim that $f$ is a homomorphism. In order to do that, let us consider three elements $a_1,a_2,a\in  H^{(X,\vec\varphi)}_k(u,v)$, such that $a_1+a_2=a$. Obviously,
$$f(a_1)+f(a_2)=[h_*(\gamma_{a_1})]+[h_*(\gamma_{a_2})]=[h_*(\gamma_{a_1})+h_*(\gamma_{a_2})]=[h_*(\gamma_{a_1}+\gamma_{a_2})]$$
$$f(a)=[h_*(\gamma_{a})].$$
Now, $[h_*(\gamma_{a_1}+\gamma_{a_2})]-[h_*(\gamma_{a})]=[h_*(\gamma_{a_1}+\gamma_{a_2})-h_*(\gamma_{a})]=[h_*(\gamma_{a_1}+\gamma_{a_2}-\gamma_{a})]$.
Since $a_1+a_2=a$, a $(k+1)$-cycle $\tau$ in $X\langle\vec\varphi\preceq \vec v\rangle$ exists, such that $\partial \tau=\gamma_{a_1}+\gamma_{a_2}-\gamma_a$. Given that $\|\vec\varphi-\vec\psi\circ h\|_\infty\le \epsilon$, $h_*(\tau)$ is a $(k+1)$-cycle in $Y\langle\vec\psi\preceq \vec v+\vec\epsilon\rangle$. Moreover, recalling that $h_*$ is a chain map, $\partial h_*(\tau)=h_*(\partial \tau)=h_*(\gamma_{a_1}+\gamma_{a_2}-\gamma_a)$, so that $h_*(\gamma_{a_1}+\gamma_{a_2}-\gamma_a)$ is a boundary in $Y\langle\vec\psi\preceq \vec v+\vec\epsilon\rangle$ and hence $[h_*(\gamma_{a_1}+\gamma_{a_2}-\gamma_a)]$ is the null element in $ H^{(Y,\vec\psi)}_k(u-\epsilon,v+\epsilon)$. This proves that $f(a_1)+f(a_2)=f(a)$.

Finally, we claim that $f$ is surjective. Indeed, let  $b\in  H^{(Y,\vec\psi)}_k(u-\epsilon,v+\epsilon)$ and $\beta\in b$, so that $\beta$ is a cycle in $Y\langle\vec\psi\preceq \vec u-\vec\epsilon\rangle$. Let $\alpha$ be the cycle $\left((h_*)^{-1}\right)(\beta)=\left(h^{-1}\right)^*(\beta)$. Given that $\|\vec\varphi\circ h^{-1}-\vec\psi\|_\infty=\|\vec\varphi-\vec\psi\circ h\|_\infty\le \epsilon$, $\alpha$ is a cycle in $X\langle\vec\varphi\preceq \vec u\rangle$. Therefore the element $a\in  H^{(X,\vec\varphi)}_k(u,v)$ the cycle $\alpha$ belongs to is an element of the set $S$. Then $f(a)=[h_*(\gamma_{a})]$. Now, $\gamma_{a}$ and $\alpha$ are homologous in $X\langle\vec\varphi\preceq \vec v\rangle$. Once again because of the inequality $\|\vec\varphi-\vec\psi\circ h\|_\infty\le \epsilon$, it follows that $h_*(\gamma_{a})$ and $\beta=h_*(\alpha)$ are homologous in $Y\langle\vec\psi\preceq \vec v+\vec\epsilon\rangle$. Hence $f(a)=b$, and $f$ is proven to be surjective.

\end{proof}

Now we can prove that $d_T$ gives a lower bound for the natural pseudo-distance $\delta$.

\begin{theorem}\label{bound}
Let $X,Y$ be two compact topological spaces  endowed with
two continuous functions $\vec \varphi: X \to \R^n$, $\vec \psi: Y
\to \R^n$. Then
$d_T\left( H^{(X,\vec\varphi)}_k, H^{(Y,\vec\psi)}_k\right)\le \delta\left((X,\vec\p),(Y,\vec\psi)\right)$.
\end{theorem}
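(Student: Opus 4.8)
The plan is to reduce the entire statement to Lemma~\ref{lemmahom}, invoking it once in each of the two directions appearing in the definition of $d_T$, and to absorb the infimum in the definition of $\delta$ by a simple approximation argument. First I would dispose of the trivial case: if $X$ and $Y$ are not homeomorphic, then $\delta\left((X,\vec\p),(Y,\vec\psi)\right)=\infty$ by definition, and since $d_T$ takes values in $[0,\infty]$ the inequality $d_T\le\delta$ holds automatically. Hence I may assume that $X$ and $Y$ are homeomorphic, so that $\delta$ equals the infimum of $\|\vec\varphi-\vec\psi\circ h\|_\infty$ over all homeomorphisms $h:X\to Y$.

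Fix an arbitrary real number $\epsilon>\delta\left((X,\vec\p),(Y,\vec\psi)\right)$. By the definition of the infimum there exists a homeomorphism $h:X\to Y$ with $\|\vec\varphi-\vec\psi\circ h\|_\infty\le\epsilon$. Applying Lemma~\ref{lemmahom} to this $h$ yields, for every pair $(\vec u,\vec v)\in\Delta^+$, a subgroup $S$ of $H^{(X,\vec\varphi)}_k(\vec u,\vec v)$ together with a surjective homomorphism $f:S\to H^{(Y,\vec\psi)}_k(\vec u-\vec\epsilon,\vec v+\vec\epsilon)$. This is precisely condition $(1)$ in Definition~\ref{distancedef} for this value of $\epsilon$.

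For condition $(2)$ I would apply the same lemma with the roles of the two pairs exchanged, using the homeomorphism $h^{-1}:Y\to X$. The only hypothesis to verify is the norm bound: since $h$ is a bijection, reindexing the maximum by $y=h(x)$ gives $\|\vec\psi-\vec\varphi\circ h^{-1}\|_\infty=\max_{x\in X}\|\vec\psi(h(x))-\vec\varphi(x)\|_\infty=\|\vec\varphi-\vec\psi\circ h\|_\infty\le\epsilon$, where I used the symmetry of $\|\cdot\|_\infty$. Thus Lemma~\ref{lemmahom} provides, for every $(\vec u,\vec v)\in\Delta^+$, a surjective homomorphism from a subgroup of $H^{(Y,\vec\psi)}_k(\vec u,\vec v)$ onto $H^{(X,\vec\varphi)}_k(\vec u-\vec\epsilon,\vec v+\vec\epsilon)$, which is condition $(2)$. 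Consequently $\epsilon\in E$, whence $d_T\left( H^{(X,\vec\varphi)}_k, H^{(Y,\vec\psi)}_k\right)\le\epsilon$. Since this holds for every $\epsilon>\delta$, passing to the infimum over such $\epsilon$ yields $d_T\le\delta$, which is the thesis.

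I do not expect a serious obstacle here, because all the homological content — constructing the subgroup $S$, defining the surjection, and checking it is a well-defined homomorphism via the chain-map property of $h_*$ — has already been isolated in Lemma~\ref{lemmahom}. The present argument is essentially bookkeeping, and the one point that genuinely deserves care is that a \emph{single} homeomorphism $h$ with $\|\vec\varphi-\vec\psi\circ h\|_\infty\le\epsilon$ simultaneously supplies the required surjections for \emph{all} pairs $(\vec u,\vec v)\in\Delta^+$; only this uniformity guarantees that $\epsilon$ truly belongs to $E$ rather than merely satisfying the defining conditions for isolated pairs. The symmetric direction costs nothing beyond the identity $\|\vec\varphi\circ h^{-1}-\vec\psi\|_\infty=\|\vec\varphi-\vec\psi\circ h\|_\infty$ noted above.
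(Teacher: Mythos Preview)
Your proof is correct and follows essentially the same route as the paper: both arguments pick a homeomorphism $h$ realizing the natural pseudo-distance up to an arbitrarily small slack, apply Lemma~\ref{lemmahom} to $h$ and to $h^{-1}$ to obtain the two surjections required by Definition~\ref{distancedef}, and then let the slack tend to zero. Your write-up is in fact slightly more explicit than the paper's in spelling out the non-homeomorphic case and the identity $\|\vec\psi-\vec\varphi\circ h^{-1}\|_\infty=\|\vec\varphi-\vec\psi\circ h\|_\infty$.
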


\begin{proof}\label{proofbound}
Let us set $\bar \delta=\delta\left((X,\vec\varphi),(Y,\vec\psi)\right)$.
Because of the definition of natural pseudodistance, for each $\eta>0$ we can find a homeomorphism
$h_\eta:X\to Y$ such that $\|\vec \varphi-\vec \psi\circ h_\eta\|_\infty\le \bar\delta+\eta$.
By applying Lemma \ref{lemmahom} to both the homeomorphisms
$h_\eta:X\to Y$ and $h_\eta^{-1}:Y\to X$, we get that $d_T\left( H^{(X,\vec\varphi)}_k, H^{(Y,\vec\psi)}_k\right)\le \bar\delta+\eta$.
Since $\eta$ can be chosen arbitrarily close to $0$, our thesis follows.
\end{proof}

Previous Theorem~\ref{bound} gives an easy way to get lower bounds for the natural pseudo-distance $\delta$:

\begin{corollary}\label{cor1}
Let $X,Y$ be two compact topological spaces  endowed with
two continuous functions $\vec \varphi: X \to \R^n$, $\vec \psi: Y
\to \R^n$. If no surjective homomorphism exists from a subgroup of $ H^{(X,\vec\varphi)}_k(\vec u,\vec v)$ onto $ H^{(Y,\vec\psi)}_k(\vec u',\vec v')\}$, then  $\delta\left((X,\vec\varphi),(Y,\vec\psi)\right)\ge \min_i\min \{u_i-u'_i,v'_i-v_i\}$.
\end{corollary}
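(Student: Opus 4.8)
The plan is to argue by contradiction, converting the hypothesis about persistent homology groups into an upper bound for $d_T$ and then invoking Theorem~\ref{bound}. Set $\mu=\min_i\min\{u_i-u'_i,\,v'_i-v_i\}$; the goal is $\delta\left((X,\vec\varphi),(Y,\vec\psi)\right)\ge\mu$. Since $\delta\ge 0$, the inequality is trivial when $\mu\le 0$, so assume $\mu>0$ and suppose for contradiction that $\delta\left((X,\vec\varphi),(Y,\vec\psi)\right)<\mu$. By Theorem~\ref{bound} this gives $d_T\left( H^{(X,\vec\varphi)}_k, H^{(Y,\vec\psi)}_k\right)\le\delta<\mu$, so, since $d_T=\inf E$ with $E$ the set of Definition~\ref{distancedef}, there is some $\epsilon\in E$ with $0\le\epsilon<\mu$. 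Applying clause (1) of that definition to the particular pair $(\vec u,\vec v)$ in the statement, I obtain a surjective homomorphism $f$ from a subgroup of $ H^{(X,\vec\varphi)}_k(\vec u,\vec v)$ onto $ H^{(Y,\vec\psi)}_k(\vec u-\vec\epsilon,\vec v+\vec\epsilon)$.

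The crux is a monotonicity step carried out entirely on the $(Y,\vec\psi)$ side. Because $\epsilon<\mu$, for every index $i$ we have $u'_i<u_i-\epsilon$ and $v_i+\epsilon<v'_i$, hence $\vec u'\preceq\vec u-\vec\epsilon$ and $\vec v+\vec\epsilon\preceq\vec v'$. These give a chain of inclusion-induced homomorphisms
\[
H_k(Y\langle\vec\psi\preceq\vec u'\rangle)\xrightarrow{\ \alpha\ }H_k(Y\langle\vec\psi\preceq\vec u-\vec\epsilon\rangle)\xrightarrow{\ \beta\ }H_k(Y\langle\vec\psi\preceq\vec v+\vec\epsilon\rangle)\xrightarrow{\ \gamma\ }H_k(Y\langle\vec\psi\preceq\vec v'\rangle),
\]
in which $\im\beta= H^{(Y,\vec\psi)}_k(\vec u-\vec\epsilon,\vec v+\vec\epsilon)$ and $\im(\gamma\circ\beta\circ\alpha)= H^{(Y,\vec\psi)}_k(\vec u',\vec v')$. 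Since $\im\alpha\subseteq H_k(Y\langle\vec\psi\preceq\vec u-\vec\epsilon\rangle)$, we get $ H^{(Y,\vec\psi)}_k(\vec u',\vec v')=\gamma(\beta(\im\alpha))\subseteq\gamma(\im\beta)$. Setting $S'=(\gamma|_{\im\beta})^{-1}\!\left( H^{(Y,\vec\psi)}_k(\vec u',\vec v')\right)$, the restriction $g=\gamma|_{S'}$ is then a surjective homomorphism from the subgroup $S'$ of $ H^{(Y,\vec\psi)}_k(\vec u-\vec\epsilon,\vec v+\vec\epsilon)$ onto $ H^{(Y,\vec\psi)}_k(\vec u',\vec v')$.

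Finally I would compose the two surjections exactly as in the triangle-inequality argument of Theorem~\ref{isapseudodistance}: restricting $f$ to $\tilde S=f^{-1}(S')$ and forming $g\circ f|_{\tilde S}$ produces a surjective homomorphism from the subgroup $\tilde S$ of $ H^{(X,\vec\varphi)}_k(\vec u,\vec v)$ onto $ H^{(Y,\vec\psi)}_k(\vec u',\vec v')$. This contradicts the hypothesis that no such surjection exists, so $\delta<\mu$ is untenable and $\delta\ge\mu$ follows.

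I expect the monotonicity step to be the main obstacle. The delicate point is that passing to the wider parameter interval must yield a surjection in the correct direction, namely from a subgroup of the narrower-interval group onto the wider-interval group; one cannot simply hope that $\gamma$ sends $\im\beta$ onto the target, because in general $ H^{(Y,\vec\psi)}_k(\vec u',\vec v')$ is only a \emph{proper} subgroup of $\gamma(\im\beta)$, which is why $S'$ must be isolated as a preimage. Everything else — the choice of $\epsilon\in E$, clause (1) of Definition~\ref{distancedef}, and the composition of surjections by restriction to preimages — is bookkeeping already rehearsed in the proofs of Lemma~\ref{lemmahom} and Theorem~\ref{isapseudodistance}.
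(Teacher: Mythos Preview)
Your proof is correct and follows exactly the route the paper sketches: the paper's one-line argument is that the hypothesis forces $d_T\ge\mu$, after which Theorem~\ref{bound} gives $\delta\ge d_T\ge\mu$, and your contradiction argument is precisely the contrapositive of the first step with the monotonicity details (the restriction to $S'=(\gamma|_{\im\beta})^{-1}(\cdot)$ and the composition $g\circ f|_{\tilde S}$) carefully filled in where the paper simply says ``easily''. In particular your identification of the only nontrivial point --- that $\gamma$ must be restricted to the preimage of the target rather than to all of $\im\beta$ --- is exactly right.
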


\begin{proof}\label{proofcor1}
It follows from Theorem~\ref{bound}, observing that our hypotheses easily imply that the inequality $d_T\left( H^{(X,\vec\varphi)}_k, H^{(Y,\vec\psi)}_k\right)\ge \min_i\min \{u_i-u'_i,v'_i-v_i\}$ holds.
\end{proof}

Our last result illustrates the relationship between $d_T$ and the change of the filtering function, measured by the max-norm. It shows that $d_T$ is a stable distance.

\begin{corollary}\label{cor2}
Let $X$ be a compact topological space  endowed with
two continuous functions $\vec \varphi: X \to \R^n$, $\vec \psi: X
\to \R^n$. Then $d_T\left( H^{(X,\vec\varphi)}_k, H^{(X,\vec\psi)}_k\right)\le \|\vec \p-\vec\psi\|_\infty$.
\end{corollary}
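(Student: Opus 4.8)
The plan is to derive the statement directly from Theorem~\ref{bound}, specializing it to the case in which the two pairs share the same underlying space. Setting $Y=X$, Theorem~\ref{bound} gives
\[
d_T\left( H^{(X,\vec\varphi)}_k, H^{(X,\vec\psi)}_k\right)\le \delta\left((X,\vec\varphi),(X,\vec\psi)\right),
\]
so it remains only to bound the natural pseudo-distance on the right-hand side by $\|\vec\varphi-\vec\psi\|_\infty$.

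To do this, I would recall the definition of $\delta$. Since $X$ is trivially homeomorphic to itself, we are in case (i), and $\delta\left((X,\vec\varphi),(X,\vec\psi)\right)$ equals the infimum of $\|\vec\varphi-\vec\psi\circ h\|_\infty$ taken over all homeomorphisms $h:X\to X$. Because the identity $\mathrm{id}_X$ is one such homeomorphism, the infimum is bounded above by the value attained at $h=\mathrm{id}_X$, namely $\|\vec\varphi-\vec\psi\circ \mathrm{id}_X\|_\infty=\|\vec\varphi-\vec\psi\|_\infty$. Chaining the two inequalities then yields the thesis.

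Regarding difficulty, there is essentially no obstacle here, as the corollary is an immediate specialization of the already-established Theorem~\ref{bound}. The only point deserving a moment's care is that one must invoke case (i) of the definition of $\delta$ (the spaces \emph{are} homeomorphic, so $\delta$ is finite and given by the infimum over homeomorphisms) and then observe that choosing the identity as the competing homeomorphism---rather than optimizing over all of them---already suffices to produce the desired max-norm bound. No new homological construction is required, since all the work of building subgroups and surjective homomorphisms was carried out once and for all in Lemma~\ref{lemmahom} and absorbed into the proof of Theorem~\ref{bound}.
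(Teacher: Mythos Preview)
Your proposal is correct and follows exactly the same approach as the paper: apply Theorem~\ref{bound} with $Y=X$ and then bound $\delta\left((X,\vec\varphi),(X,\vec\psi)\right)$ by $\|\vec\varphi-\vec\psi\|_\infty$ using the identity homeomorphism. The paper's proof is a one-line version of what you wrote.
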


\begin{proof}\label{proofcor2}
It follows from Theorem~\ref{bound}, observing that $\delta\left((X,\vec\p),(X,\vec\psi)\right)\le \|\vec \p-\vec\psi\|_\infty$.
\end{proof}

\section*{Conclusions}
The main contribution of this paper is the proof that it is possible to compare persistent homology groups in a stable way also in presence of torsion, although no complete representation by persistent diagrams or other compact descriptors is available, at the time we are writing. Our approach, based on the new pseudo-distance $d_T$, opens the way to experimentation in the case that the homology coefficients are taken in an Abelian group instead of a field. In this case, the classical matching distance between persistent diagrams cannot be applied without forgetting the information contained in the torsion of the persistent homology groups. We plan to extend these ideas to other concepts and problems in Multidimensional Persistence.
\bigskip

{\bf Acknowledgments.} Research partially supported by DISTEF. The author thanks Sara, and all his friends in the ``Coniglietti'' Band.


\begin{thebibliography}{10}

\bibitem{BiCeFrGiLa08} S. Biasotti, A. Cerri, P. Frosini, D. Giorgi and
C. Landi, {\it Multidimensional size functions for shape comparison}, Journal of Mathematical Imaging and Vision, vol. 32, no. 2, 161--179 (2008).

\bibitem{CaDiFe10} F. Cagliari, B. Di Fabio, M. Ferri,
{\it One-dimensional reduction of multidimensional persistent homology}, Proc. Amer. Math. Soc., vol. 138, no. 8, 3003--3017 (2010).

\bibitem{CaLa11}
F.Cagliari, C. Landi, {\it Finiteness of  rank invariants of multidimensional persistent homology groups}, Applied Mathematics Letters, in press (available online).

\bibitem{CaSiZo09}
G. Carlsson, S. Gurjeet, A. Zomorodian,
{\em Computing Multidimensional Persistence},
ISAAC '09: Proceedings of the 20th International Symposium on Algorithms and Computation, 730--739 (2009).


\bibitem{CaZo*05}
G. Carlsson, A. Zomorodian, A. Collins, L. J. Guibas,
\emph{Persistence Barcodes for Shapes},
International Journal of Shape Modeling, vol. 11, no. 2, 149--187 (2005).

\bibitem{CaZo07}
G. Carlsson, A. Zomorodian,
\emph{The theory of multidimensional persistence},
SCG '07: Proceedings of the twenty-third annual symposium on Computational geometry, Gyeongju, South Korea, 184--193 (2007).

\bibitem{CaZo09} G. Carlsson, A. Zomorodian,  {\it The Theory of Multidimensional Persistence}, Discrete and Computational Geometry, vol. 42, no. 1, 71--93 (2009).

\bibitem{CeDiFeFrLa09}
A. Cerri, B. Di Fabio, M. Ferri, P. Frosini, and C. Landi. Multidimensional persistent homology is stable. Technical Report 2603, Universit\`a di Bologna, 2009. available at http://amsacta.cib.unibo.it/2603/.

\bibitem{CeFeGi06} A. Cerri, M. Ferri, D. Giorgi,  {\it Retrieval of trademark images by means of size functions}, Graph. Models, vol. 68, no. 5, 451--471 (2006).

\bibitem{ChCo*09} F. Chazal, D. Cohen-Steiner, M. Glisse, L.J. Guibas, S.Y. Oudot,  {\it Proximity of persistence modules and their diagrams}, SCG '09: Proceedings of the 25th annual symposium on Computational geometry, Aarhus, Denmark, 237--246 (2009).

\bibitem{CoEdHa07} D. Cohen-Steiner, H. Edelsbrunner, J. Harer,
{\em Stability of Persistence Diagrams},
Discrete and Computational Geometry, vol. 37, no. 1, 103--120 (2007).

\bibitem{CoEdHaMi10}
D. Cohen-Steiner, H. Edelsbrunner, J. Harer, Y. Mileyko,
{\em Lipschitz Functions Have {L}$_{\mbox{{\it p}}}$-Stable Persistence}, Foundations of Computational Mathematics,
vol. 10, no. 2, 127--139 (2010).

\bibitem{dAFrLa10} M. d'Amico, P. Frosini, C. Landi,  {\it Natural pseudo-distance and optimal matching between reduced size functions},
Acta Applicandae Mathematicae, vol. 109, no. 2, 527--554 (2010).

\bibitem{DeGh07} V. de Silva, R. Ghrist,  {\it Coverage in sensor networks via persistent homology},
Algebr. Geom. Topol., vol. 7, 339--358 (2007).

\bibitem{DoFr04}
P. Donatini, P. Frosini,
{\em Natural pseudodistances between closed manifolds},
Forum Mathematicum, vol. 16, no. 5, 695--715 (2004).

\bibitem{DoFr07}
P. Donatini, P. Frosini,
{\em Natural pseudodistances between closed surfaces},
Journal of the European Mathematical Society, vol. 9, no. 2, 231--253 (2007).

\bibitem{DoFr09}
P. Donatini, P. Frosini,
{\em Natural pseudodistances between closed curves},
Forum Mathematicum, vol. 21, no. 6, 981--999 (2009).

\bibitem{EdHa08}
H. Edelsbrunner, J. Harer,
{\em Persistent homology---a survey},
Contemp. Math., vol. 453, 257--282 (2008).

\bibitem{EdHa09}
H. Edelsbrunner, J. Harer, {Computational Topology: An Introduction}, American Mathematical Society, 2009.

\bibitem{EdLeZo02}
H. Edelsbrunner, D. Letscher, A. Zomorodian,
{\em Topological persistence and simplification},
Discrete \& Computational Geometry, vol. 28, no. 4, 511--533 (2002).

\bibitem{Fa08}
M. Farber,
{\em Invitation to topological robotics},
European Mathematical Society, Zurich Lectures in Advanced Mathematics, 2008.

\bibitem{FrMu99}
P. Frosini, M. Mulazzani,
{\em Size homotopy groups for computation of natural size distances},
 Bulletin of the Belgian Mathematical Society, vol. 6, no. 3, 455--464 (1999).

\bibitem{Gh08}
R. Ghrist,
{\em Barcodes: the persistent topology of data},
Bull. Amer. Math. Soc. (N.S.), vol. 45, no. 1, 61--75 (2008).

\bibitem{MoSaSa08}
D. Moroni, M. Salvetti, O. Salvetti,
{\em Multi-scale Representation and Persistency for Shape Description},
MDA '08: Proceedings of the 3rd international conference on Advances
in Mass Data Analysis of Images and Signals in Medicine, Biotechnology, Chemistry and Food Industry, Leipzig, Germany, 123--138 (2008).

\bibitem{VeUrFrFe93}
A. Verri, C. Uras, P. Frosini, and M. Ferri, {\it On the use of size functions for shape
analysis}, Biol. Cybern., vol. 70, 99--107 (1993).

\bibitem{Zo05}
A. Zomorodian,
Topology for computing,
Cambridge Monographs on Applied and Computational Mathematics 16, Cambridge University Press,  2005.


\end{thebibliography}
\end{document}